\theoremstyle{plain}
\newtheorem{Theorem}{Thm}[section]
\newtheorem{Lem}[Theorem]{Lemma}
\theoremstyle{definition}
\newtheorem{Exa}[Theorem]{Example}
\newcommand{\C}{\mathbb{C}}
\renewcommand\P{\mathbb{P}}
\newcommand{\R}{\mathbb{R}}
\newcommand{\ii}{\mathrm{i}}
\newcommand{\id}{\mathds{1}}
\begin{document}
\selectlanguage{English}
\title{Kippenhahn's construction revisited}
\author{Stephan Weis\textsuperscript{1}}
\address{{}\textsuperscript{1} Wald-Gymnasium Berlin, 
Germany,
e-mail: \texttt{maths@weis-stephan.de},\newline
ORCID: \texttt{0000-0003-1316-9115}}
\begin{abstract}
Kippenhahn discovered that the numerical range of a complex square 
matrix is the convex hull of a plane real algebraic curve. Here, we 
present an example of a convex set, which has a similar algebraic 
description as the numerical range, whereas the analogue of 
Kippenhahn's construction fails regarding isolated, singular points 
of the curve. This example prompted us to carefully review 
Kippenhahn's assertion and to highlight aspects of a complete proof
that was achieved with methods of convex geometry and real algebraic 
geometry.
\end{abstract}
\date{June 11th, 2023}
\subjclass[2020]{15A60,14P99}
\keywords{numerical range, Kippenhahn curve, singular point,
convex algebraic geometry, plane real algebraic curves}
\maketitle
%
%
%
%
\section{Introduction}
Let $A$ be an $n\times n$ matrix with complex coefficients.
The \emph{numerical range}, also known as \emph{field of values},
of $A$ is the subset of the complex plane $\C$ defined by
\[
W(A)=\{\langle \eta\mid A\eta\rangle : 
\langle \eta\mid \eta\rangle=1, \eta\in\C^n\}\,,
\]
where
$\langle \eta\mid \xi\rangle=\eta_1\overline{\xi_1}+\eta_2\overline{\xi_2}+\dots+\eta_n\overline{\xi_n}$ 
denotes the inner product of two vectors $\eta=(\eta_1,\eta_2,\dots,\eta_n)^T$
and $\xi=(\xi_1,\xi_2,\dots,\xi_n)^T$ in $\C^n$. 
The numerical range $W(A)$ is invariant under unitary similarity. It is used to 
study spectra and norms of matrices \cite{HornJohnson1991} and of operators 
\cite{GauWu2021,Jefferies2004}. 
\par
The shape of the numerical range is well understood \cite{Keeler-etal1997}
for matrices of sizes $n=2$ and $n=3$. The shape remains a research topic 
in matrix theory \cite{ChienNakazato2012,Bebiano-etal2021,JiangSpitkovsky2022} 
and quantum information theory \cite{Gawron-etal2010} for matrices of sizes 
$n\geq 4$. Among the basic key-theorems is the 
Toeplitz-Hausdorff theorem \cite{Toeplitz1918,Hausdorff1919} from the years 
1918/19, which asserts that $W(A)$ is a convex set, and 
Kippenhahn's assertion \cite[Sec.~3, Nr.~10]{Kippenhahn1951} from 1951, which 
states that $W(A)$ is the convex hull of a plane real algebraic curve $C_A$. 
The curve $C_A$ has been called \emph{boundary generating curve} 
\cite{Kippenhahn1951,ChienNakazato2010}, and was renamed to 
\emph{Kippenhahn curve} 
\cite{Daepp-etal2018,GauWu2021,Bebiano-etal2021,JiangSpitkovsky2022}
recently. 
\par
In this paper, we take a closer look at Kippenhahn's proof that the curve $C_A$ 
is included in $W(A)$. We illustrate Kippenhahn's construction of $C_A$ and his
assertion with a $3\times 3$ matrix $A$ in Section~\ref{sec:constructKC}. 
In Section~\ref{sec:revealingexample} we present a convex set $W$\! for which 
a similar construction is feasible, but the curve analogous to $C_A$ has isolated, 
singular points outside of $W$\!. These examples are supported by the computer 
algebra system \texttt{Wolfram Mathematica}. Despite this flaw, Kippenhahn's 
proof is still advocated \cite[Thm.~1.3]{GauWu2021}, 
\cite[Thm.~13.4]{Daepp-etal2018} without paying attention that it is incomplete. 
An exception is the paper \cite{ChienNakazato2010} where Chien and Nakazato 
analyze singular points of the numerical range.
\par
During the last fifteen years or so, we witnessed an increasing research 
activity at the crossroads of convex geometry and real algebraic geometry, 
a field called \emph{convex algebraic geometry}
\cite{HeltonVinnikov2007,Henrion2010,Sanyal-etal2011,Netzer2012,PlaumannVinzant2013,
Blekerman-etal2013,Sinn2015,Scheiderer2018}. 
To our surprise, Sinn \cite[Example~3.15]{Sinn2015} had proven an assertion 
similar to Kippenhahn's already in 2014. By translating Sinn's result 
to the numerical range, and by showing that all singular points of $C_A$ lie 
inside $W(A)$, we obtained a complete proof of Kippenhahn's assertion 
\cite[Thm.~1.1]{PSW2021}. 
\par
The set $W$ mentioned above is not the numerical range of a matrix since the 
analogue of Kippenhahn's assertion fails for $W$\!. This follows also from 
the fact that the dual convex set $W^\ast$ to $W$ is bounded by the 
\emph{Fermat curve} $x_1^6+x_2^6=1$. All lines 
in $\R^2$ through this curve intersect it in two points instead of six, see 
Fig.~\ref{fig:television6}b). In other words, $W^\ast$ is not 
\emph{rigidly convex} \cite{HeltonVinnikov2007} and therefore it cannot be 
the dual convex set to the numerical range of a matrix 
\cite{Henrion2010,HeltonSpitkovsky2012}. This is, of course, a peculiarity of 
the finite dimensionality. Any bounded convex nonempty subset of the plane is 
the numerical range of a bounded linear operator on a Hilbert space 
\cite{Pollack1974}.
\par
Section~\ref{sec:Kippenhahnright} highlights some aspects of the proof
that the curve $C_A$ is included in $W(A)$. The preliminary 
Sec.~\ref{sec:dualpicture} changes perspective from points outside of 
$W(A)$ to lines crossing the dual convex set.
\par
In this paper, we do not address the converse part of Kippenhahn's assertion, 
that $W(A)$ is included in the convex hull of $C_A$. This follows 
from \cite[Cor~3.14]{Sinn2015}, and is adapted to the setting of the numerical 
range in Thm.~4.5 and Sec.~6 in \cite{PSW2021}.
\par
%
%
\section{Construction of the Kippenhahn Curve}
\label{sec:constructKC}
Following Kippenhahn \cite{Kippenhahn1951}, we construct the curve $C_A$ 
in three steps along the drawings in Fig.~\ref{fig:NRdeg6}. For the sake 
of clarity, we consider only irreducible curves defined by irreducible 
polynomials in the sequel. A general reference for algebraic curves is 
Fischer \cite{Fischer2001}. 
\par
We identify $\C=\R\oplus\ii\R\cong\R^2$, where $\ii$ is the imaginary unit, 
and we view the numerical range $W(A)$ as a subset of $(\R^2)^\ast$, the 
dual vector space to $\R^2$. Writing $A=K+\ii L$, where $K,L\in\C^{n\times n}$
are hermitian matrices, we identify the numerical range $W(A)$ with the set
\begin{equation}\label{eq:NR}
W=\{( \langle \eta\mid K\eta\rangle, \langle \eta\mid L\eta\rangle) 
: \langle \eta\mid \eta\rangle=1, \eta\in\C^n\} \subset (\R^2)^\ast\,.
\end{equation}
\par
\textbf{Step 1.}
The \emph{support function} \cite{Schneider2014} of the numerical range
$W\subset(\R^2)^\ast$ at the direction $x\in\R^2$ is
\[
h(x)=\min_{y\in W}\langle x,y\rangle\,,
\]
where the pairing of $x=(x_1,x_2)^T\in\R^2$ and 
$y=(y_1,y_2)\in(\R^2)^\ast$ is given by $\langle x,y\rangle=x_1y_1+x_2y_2$. 
The number $h(x)$ is the signed distance of the origin $(0,0)\in(\R^2)^\ast$ 
from the \emph{supporting line} with inner normal vector $x$,
\begin{equation}\label{eq:supplineaffine}
\{y\in(\R^2)^\ast : \langle x,y\rangle=h(x)\}\,.
\end{equation}
This means that $x$ is perpendicular to this line and points into the 
half-space $\{y\in(\R^2)^\ast : \langle x,y\rangle\geq h(x)\}$, which contains
$W(A)$. Toeplitz \cite{Toeplitz1918} showed that $h(x)$ is the smallest 
eigenvalue of the hermitian matrix $x_1K+x_2L$. Let 
\begin{equation}\label{eq:KL}
K=
\left(
\begin{array}{ccc}
  0 & -1 & 0 \\
 -1 &  0 & 1 \\
  0 &  1 & 0 \\
\end{array}
\right)
\quad\text{and}\quad
L=
-\frac{1}{4}\left(
\begin{array}{ccc}
  1 & 2 & -4 \\
  2 & 1 &  2 \\
 -4 & 2 &  1 \\
\end{array}
\right)\,.
\end{equation}
Some supporting lines for this example are depicted in Fig.~\ref{fig:NRdeg6}a).
Note that $h(x)$ is negative as $x_1K+x_2L$ has a negative eigenvalue for 
nonzero $x\in\R^2$. The rounded triangle in the center of 
Fig.~\ref{fig:NRdeg6}a), without lines crossing, depicts the numerical range.
\par
\begin{figure}%
\parbox[b]{5mm}{\raggedleft a)}
\parbox[b]{5cm}{%
\includegraphics[width=5cm]{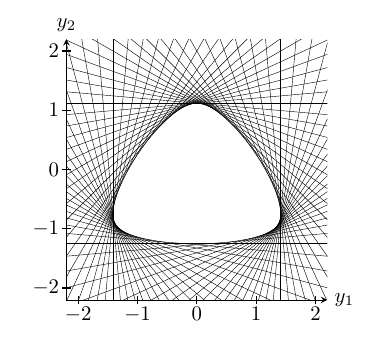}}%
\hspace{5mm}%
\parbox[b]{5mm}{\raggedleft b)}
\parbox[b]{5cm}{%
\includegraphics[width=5cm]{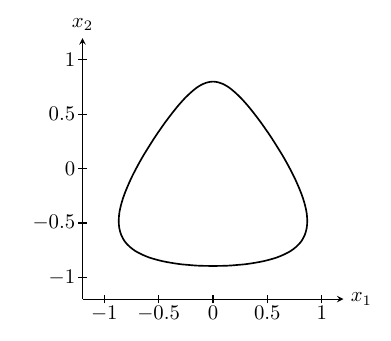}}%
\vskip1\baselineskip
\parbox[b]{5mm}{\raggedleft c)}
\parbox[b]{5cm}{%
\includegraphics[width=5cm]{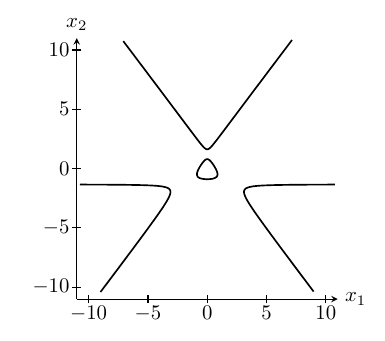}}%
\hspace{5mm}%
\parbox[b]{5mm}{\raggedleft d)}
\parbox[b]{5cm}{%
\includegraphics[width=5cm]{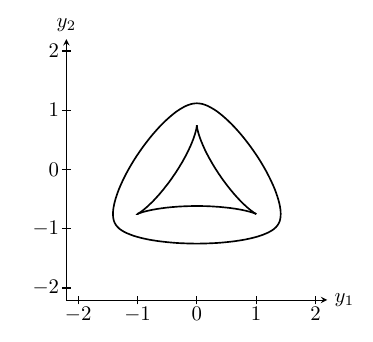}}%
\caption{%
a) Supporting lines of the numerical range $W(A)$ with matrices $A=K+\ii L$ 
from equation~\eqref{eq:KL}. 
b) and c) The real, affine part of the algebraic curve $D$ that contains the 
supporting lines of $W(A)$.
d) The Kippenhahn curve $C_A$.}
\label{fig:NRdeg6}
\end{figure}
\textbf{Step 2.}
To obtain an algebraic equation for the set of supporting lines, we use
a one-to-one correspondence between the points in the complex projective 
plane $\P^2$ and lines in the dual projective plane $(\P^2)^\ast$. The 
\emph{polar} of a point $(x_0:x_1:x_2)\in\P^2$ is the line 
\[
\{(y_0:y_1:y_2)\in(\P^2)^\ast \mid x_0y_0+x_1y_1+x_2y_2=0\}\,.
\]
Conversely, the point $(x_0:x_1:x_2)$ is the \emph{pole} of this line.
The line is a \emph{real line} if its pole can be written with real numbers 
$x_0,x_1,x_2$. (In this paper we use poles and polars with respect to the 
quadric $\{x\in\P^2:x_0^2+x_1^2+x_2^2=0\}$.) We also use the analogous 
one-to-one correspondence between points in $(\P^2)^\ast$ and lines in 
$\P^2$ that comes with the biduality of $\P^2=((\P^2)^\ast)^\ast$. 
We will often abuse notation and identify a line with its pole.
Furthermore, we employ the embedding of the affine plane $\C^2$ into the 
projective plane $\P^2$ \emph{via} $(x_1,x_2)^T\mapsto(1:x_1:x_2)$ and 
the embedding of the dual affine plane $(\C^2)^\ast$ into the dual 
projective plane $(\P^2)^\ast$ \emph{via} $(y_1,y_2)\mapsto(1:y_1:y_2)$. 
\par
For all nonzero $(x_1,x_2)^T\in\R^2$, the pole of the supporting line in equation~\eqref{eq:supplineaffine} is 
\[
\ell=(-h(x):x_1:x_2)\in\P^2\,.
\]
The line $\ell$ satisfies the algebraic equation $p(\ell)=0$, where 
$p\in\R[x_0,x_1,x_2]$ is the homogeneous polynomial defined by the determinant
\[
p=\det(x_0\id+x_1K+x_2L)\,,
\]
and where $\id\in\C^{n\times n}$ is the identity matrix. This means that 
$\ell$ lies on the curve 
\[
D=\{x\in\P^2:p(x)=0\}\,.
\]
The \emph{real, affine part} of this curve is
$\{(x_1,x_2)^T\in\R^2 \mid (1:x_1:x_2)\in D\}$.
\par
The polynomial $p$ in the example \eqref{eq:KL} is provided in
Sec.~\ref{app:dualcurve}. The real, affine part of $D$ has a connected component 
of the shape of a rounded triangle, see Fig.~\ref{fig:NRdeg6}b), and three 
peripheral, unbounded, connected components outside the rounded triangle, 
see Fig.~\ref{fig:NRdeg6}c). The inner rounded triangle is the set of supporting 
lines of the numerical range $W$. The peripheral components do not contain any 
supporting lines of $W$.
\par
\textbf{Step 3.}
The final step is to pass from the curve $D$ to its dual curve. A point 
$x\in D$ is a \emph{regular point} of $D$ if at least one of the partial 
derivatives $\partial_{x_0}p$, $\partial_{x_1}p$, $\partial_{x_2}p$ is 
nonzero at $x$ (the polynomial $p$ is assumed to be irreducible). 
Otherwise, $x$ is a \emph{singular point} of $D$. If $x$ is a regular 
point, then the tangent line to $D$ at $x$ is
\[
(\partial_{x_0}p(x):\partial_{x_1}p(x):\partial_{x_2}p(x))\in(\P^2)^\ast\,.
\]
The \emph{dual curve} $D^\vee$ to $D$ is the closure in the norm topology
of the set of tangent lines to the curve $D$ at all regular points, see 
\cite[Sec.~5.1 and 3.6]{Fischer2001}. This closure comprises exactly the
set of all tangents to the curve $D$, at regular and singular points. 
It is also possible to replace the norm closure with the Zariski closure 
\cite[Thm.~2.33]{Mumford1976}.
\par
One can use Gröbner bases \cite{Cox-etal2015} to compute the algebraic equation 
for the dual curve $D^\vee$\!. In the above example, we have 
$D^\vee=\{y\in(\P^3)^\ast : q(y)=0\}$, where $q\in\R[y_0,y_1,y_2]$ is an
irreducible, homogeneous polynomial of degree six. The polynomial and the code 
from \texttt{Wolfram Mathematica} used for its calculation are provided in 
Sec.~\ref{app:dualcurve}. The real, affine part 
\[
C_A=\{(y_1,y_2)\in(\R^2)^\ast \mid (1:y_1:y_2)\in D^\vee\}
\]
of the dual curve is the Kippenhahn curve.
\par
Fig.~\ref{fig:NRdeg6}d) depicts the Kippenhahn curve $C_A$ of the example 
\eqref{eq:KL}. Here, $C_A$ has two connected components, a rounded triangle 
and a curve inside with three cusps \cite{Kippenhahn1951,Keeler-etal1997}. The 
rounded triangle contains the tangent lines at the central, rounded triangle 
in the real, affine part of $D$, shown in Fig.~\ref{fig:NRdeg6}b) and 
Fig.~\ref{fig:NRdeg6}c). The inner curve of $C_A$ comprises the tangent lines 
at the three peripheral components of the real, affine part of $D$ shown in 
Fig.~\ref{fig:NRdeg6}c).
\par
\begin{Theorem}[Kippenhahn's Assertion]\label{thm:Kippenhahn}
Let $K,L\in\C^{n\times n}$ be hermitian matrices. Then the numerical range 
$W=W(A)$ of $A=K+\ii L$ is the convex hull of the Kippenhahn curve $C_A$. 
\end{Theorem}
We refer to \cite[Sec.~6]{PSW2021} for a proof of Thm.~\ref{thm:Kippenhahn}. 
We will see in Sec.~\ref{sec:Kippenhahnright} why the theorem is unaffected 
by singular points, contrary to the example in the following 
section.
%
%
%
\begin{figure}[t]%
\parbox[b]{5mm}{\raggedleft a)}
\parbox[b]{5cm}{%
\includegraphics[width=5cm]{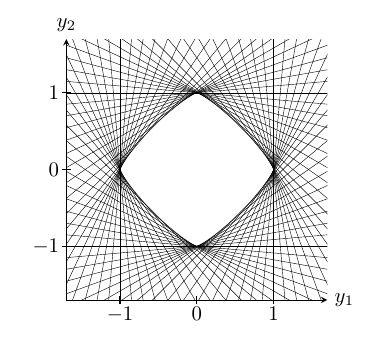}}%
\hspace{5mm}%
\parbox[b]{5mm}{\raggedleft b)}
\parbox[b]{5cm}{%
\includegraphics[width=5cm]{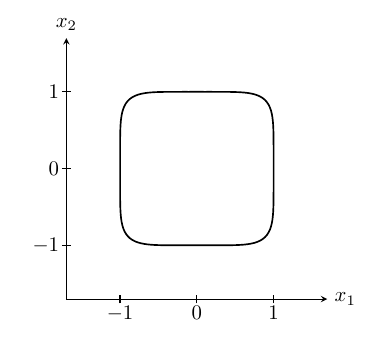}}%
\vskip1\baselineskip
\parbox[b]{5mm}{\raggedleft c)}
\parbox[b]{5cm}{%
\includegraphics[width=5cm]{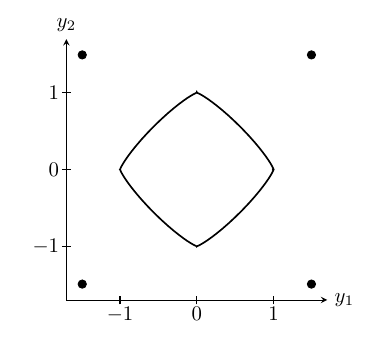}}%
\hspace{5mm}%
\parbox[b]{5mm}{\raggedleft d)}
\parbox[b]{5cm}{%
\includegraphics[width=5cm]{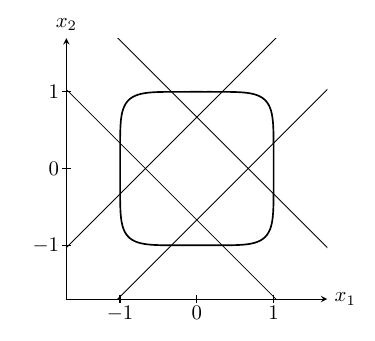}}%
\caption{%
a) Supporting lines of the convex set $W$ defined in 
equation~\eqref{eq:htele}.
b) Real, affine part $x_1^6+x_2^6=1$ of the algebraic curve $D$ that 
contains the supporting lines of $W$.
c) The real, affine part of the dual curve $D^\vee$, with isolated, 
singular points outside of $W$.
d) Real (double) tangents of the curve $D$ intersect the set 
$x_1^6+x_2^6<1$.}
\label{fig:television6}
\end{figure}
\section{A Revealing Example}
\label{sec:revealingexample}
Here, we present a convex set, which has a similar algebraic description 
as the numerical range, whereas the analogue of Thm.~\ref{thm:Kippenhahn} 
fails.
\par
\textbf{Step 1.} 
Using the notation from the prior section, we define a compact, convex set $W\subset(\R^2)^\ast$ in terms of its support function
\begin{equation}\label{eq:htele}
h(x)=-\sqrt[6]{x_1^6+x_2^6}\,,
\quad x\in\R^2\,.
\end{equation}
Some supporting lines of $W$ are depicted in Fig.~\ref{fig:television6}a).
The rounded rhomb in the center of Fig.~\ref{fig:television6}a), without 
lines crossing, depicts the convex set $W$.
\par
\textbf{Step 2.} 
For every nonzero $x\in\R^2$, the supporting line of $W$ with inner normal 
vector $x$ has the pole $\ell=(-h(x):x_1:x_2)\in\P^2$, which lies on the 
curve $D=\{x\in\P^2:p(x)=0\}$, where 
\[
p=x_0^6-x_1^6-x_2^6\,.
\]
Fig.~\ref{fig:television6}b) depicts a rounded square, which is the real, 
affine part of $D$, and which is the set of all supporting lines of $W$.
\par
\textbf{Step 3.}
The dual curve to $D$ is the variety $D^\vee=\{y\in(\P^3)^\ast : q(y)=0\}$ 
of an irreducible, homogeneous polynomial $q\in\R[y_0,y_1,y_2]$ of degree $30$,
see Sec.~\ref{app:dualcurve} for the polynomial and the code from 
\texttt{Wolfram Mathematica} used for the calculation. The curve $D^\vee$ has 
$228$ singular points, eight of which are real. These are $(1,0)$, $(0,1)$, 
$(-1,0)$, $(0,-1)$,  
\begin{equation}\label{eq:singularpointsCA}
(\omega,\omega), \quad
(\omega,-\omega), \quad
(-\omega,\omega), \quad\text{and}\quad
(-\omega,-\omega)\,,
\end{equation}
where $\omega=\left((11+5 \sqrt{5})/2)\right)^{1/6}\approx1.49\,$. The real, 
affine part of $D^\vee$ consists of a rounded rhomb, which contains the first
four singular points, and of the four isolated, singular points, which are 
depicted in Fig.~\ref{fig:television6}c). The analogue of 
Thm.~\ref{thm:Kippenhahn} fails because the isolated singular points lie 
outside of $W$. Their polars are real double tangents of the curve $D$, 
depicted in Figure~\ref{fig:television6}d). For example, the polar of 
$(1:\omega:\omega)\in D^\vee$ is tangent to $D$ at the pair of complex 
conjugate points 
\[
\frac{1}{2\omega}
(-1\pm\ii\alpha,-1\mp\ii\alpha)^T\,,
\qquad \alpha=\sqrt{5+2 \sqrt{5}}\,.
\]
%
%
\section{The Dual Picture of Points Outside the Numerical Range}
\label{sec:dualpicture}
Some singular points of the dual curve $D^\vee$ lie outside of $W$ in the 
example of Sec.~\ref{sec:revealingexample}. Here we capture the location 
of points in- or outside of $W$ in terms of whether their polars intersect
the dual convex set to $W$. 
\par
We consider a compact, convex subset $W$ of $(\R^2)^\ast$ that contains the 
origin $(0,0)\in(\R^2)^\ast$ as an interior point. The \emph{dual convex set} 
to $W$ is defined by
\[
W^\ast=\{(x_1,x_2)^T : 1+x_1y_1+x_2y_2\geq 0 \,\,\forall y\in W \}\,.
\]
The set $W^\ast$ is a compact, convex subset of $\R^2$ that contains the 
origin $(0,0)^T\in\R^2$ as an interior point, and we have $(W^\ast)^\ast=W$. 
See \cite[Thm.~1.6.1]{Schneider2014} for these statements.
\par
If the origin is not an interior point $W$\!, then conic duality 
instead of convex duality makes the ideas of the following 
Sec.~\ref{sec:Kippenhahnright} work \cite[Sec.~5]{PSW2021}. 
\begin{Lem}\label{Lem:WS}
A real point in $(\P^2)^\ast$ lies outside of $W$ if and only if 
its polar intersects the interior of $W^\ast$.
\end{Lem}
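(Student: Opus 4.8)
The plan is to push everything down to the affine plane and then combine the biduality $(W^\ast)^\ast=W$ with elementary facts about convex bodies having the origin as an interior point. First I would dispose of the points at infinity: a real point $(0:y_1:y_2)\in(\P^2)^\ast$ with $(y_1,y_2)\neq(0,0)$ never lies in $W\subset(\R^2)^\ast$, and its polar is the projective line $\{(x_0:x_1:x_2)\in\P^2:y_1x_1+y_2x_2=0\}$, which passes through $(1:0:0)$, that is, through the origin of the affine plane $\R^2$; since the origin is an interior point of $W^\ast$, this polar meets $\mathrm{int}(W^\ast)$, and the equivalence holds with both sides true. Symmetrically, the point $(1:0:0)\in(\P^2)^\ast$ corresponds to the origin $(0,0)\in(\R^2)^\ast$, which lies in $W$, and its polar is the line at infinity of $\P^2$, which misses $W^\ast\subset\R^2$; again the equivalence holds, now with both sides false. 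This reduces the lemma to a real affine point, which I identify with a vector $(y_1,y_2)\in(\R^2)^\ast$, $(y_1,y_2)\neq(0,0)$, whose polar, restricted to the affine plane, is the line $g=\{x\in\R^2:1+\scp{x,(y_1,y_2)}=0\}$.

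The key observation is that $g$ is exactly a bounding line of one of the half-planes defining $W^\ast$. Writing $W^\ast=\bigcap_{z\in W}\{x\in\R^2:1+\scp{x,z}\ge 0\}$ and invoking biduality, $(y_1,y_2)\in W=(W^\ast)^\ast$ holds if and only if $1+\scp{x,(y_1,y_2)}\ge 0$ for every $x\in W^\ast$; equivalently, $(y_1,y_2)$ lies outside $W$ precisely when there is some $x^{\circ}\in W^\ast$ with $1+\scp{x^{\circ},(y_1,y_2)}<0$. For the forward implication I would apply the intermediate value theorem to the affine function $\phi(x)=1+\scp{x,(y_1,y_2)}$ along the segment from the origin to $x^{\circ}$: from $\phi(0)=1>0>\phi(x^{\circ})$ one gets $t\in(0,1)$ with $\phi(tx^{\circ})=0$, i.e.\ $tx^{\circ}\in g$; and since the origin is interior to the convex set $W^\ast$ and $t<1$, the point $tx^{\circ}$ lies in $\mathrm{int}(W^\ast)$, because the segment from an interior point of a convex body to any of its points, excluding the far endpoint, stays in the interior (cf.\ \cite{Schneider2014}). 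Conversely, if $g$ meets $\mathrm{int}(W^\ast)$ at a point $x^{\bullet}$, then $\phi(x^{\bullet})=0$ while the gradient $(y_1,y_2)$ of $\phi$ is nonzero, so $\phi$ takes negative values arbitrarily close to $x^{\bullet}$; as a whole neighbourhood of $x^{\bullet}$ is contained in $W^\ast$, this produces a point of $W^\ast$ on which the inequality defining $(W^\ast)^\ast=W$ fails, whence $(y_1,y_2)\notin W$.

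I do not anticipate a genuine obstacle here: the lemma repackages standard facts about polar duals of convex bodies. The only points demanding care are the bookkeeping between the projective and the affine pictures, in particular the handling of the line at infinity and the point $(1:0:0)$, and the one non-elementary ingredient used above, namely that interior points of $W^\ast$ are preserved along half-open segments directed toward the boundary; it is exactly this fact that lets me upgrade $g\cap W^\ast\neq\emptyset$ to $g\cap\mathrm{int}(W^\ast)\neq\emptyset$ in the forward direction.
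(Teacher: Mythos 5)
Your proof is correct, but it takes a genuinely different route from the paper's. The paper first dualizes the statement (using $(W^\ast)^\ast=W$ to replace it by: a real line in $(\P^2)^\ast$ meets the interior of $W$ iff its pole lies outside $W^\ast$) and then argues with the support function $h$: the poles $\,-(x_1,x_2)^T/h(x)\,$ of the supporting lines parametrize the boundary of $W^\ast$ by \cite[Sec.~1.7.2, Eq.~(1.52)]{Schneider2014}, and for a pole $(x_0:x_1:x_2)$ with $x_0>0$ the polar cuts the interior of $W$ exactly when $x_0<-h(x)$, i.e.\ when the affine point $(x_1/x_0,x_2/x_0)^T$ lies outside $W^\ast$; the infinite cases are appended at the end, much as you do. You instead prove the lemma as stated, directly from the half-space description: biduality gives that $(y_1,y_2)\notin W$ iff some $x^{\circ}\in W^\ast$ violates $1+\langle x^{\circ},(y_1,y_2)\rangle\geq 0$, and you convert this into an intersection of the affine polar $g$ with $\mathrm{int}(W^\ast)$ via the intermediate value theorem along $[0,x^{\circ}]$ together with the standard fact that the half-open segment from an interior point stays in the interior; the converse uses that the nonzero gradient makes the affine form negative somewhere in a neighbourhood contained in $W^\ast$. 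Your argument is more elementary and self-contained (it needs only $(W^\ast)^\ast=W$ and the interior-segment lemma, no support-function machinery), and your explicit treatment of the points at infinity and of $(1{:}0{:}0)$ is cleaner than the paper's brief closing remark; what the paper's route buys is the boundary parametrization of $W^\ast$ by poles of supporting lines, which it reuses later (in Example~\ref{exa:television}) to identify $W^\ast$ with the set $S$, so the two proofs serve slightly different bookkeeping purposes even though both rest on the same facts from \cite{Schneider2014}.
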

\begin{proof}
Since we associated the support function $h$ with $W$, we prove the equivalent 
dual statement that a real line in $(\P^2)^\ast$ intersects the interior of $W$ 
if and only if its pole lies outside of $W^\ast$.
\par
We consider a line in $(\R^2)^\ast$, which is not incident with the origin. By 
this we mean a real line in $(\P^2)^\ast$, such that its pole 
$m=(x_0:x_1:x_2)\in\P^2$ has the affine coordinates 
$(\frac{x_1}{x_0},\frac{x_2}{x_0})^T$ for some $x_0\neq 0$. If we fix a nonzero 
tuple $x=(x_1,x_2)^T\in\R^2$, then the supporting line of $W$ with inner normal 
vector $x$ has the pole $\ell=(-h(x):x_1:x_2)$ with the affine coordinates 
$\,-(x_1,x_2)^T/h(x)\,$. These real tuples parametrize the boundary of the 
dual convex set $W^\ast$ if $(x_1,x_2)^T$ varies in a closed curve about the 
origin in $\R^2$, according 
to \cite[Sec.~1.7.2, Eq.~(1.52)]{Schneider2014}. 
\par
Let $x_0>0$. Then the polar of $m=(x_0:x_1:x_2)$ lies on the same side of the 
origin as the polar of $\ell=(-h(x):x_1:x_2)$. Clearly, the polar of $m$ 
intersects the interior of the set $W$ if and only if $x_0<-h(x)$ if and only 
if $(\frac{x_1}{x_0},\frac{x_2}{x_0})^T$ lies outside of $W^\ast$. The claim 
follows from setting $x_0=1$. 
\par
The claim extends to real lines in $(\P^2)^\ast$ through the origin, as their 
poles lie on the line at infinity. Having its pole at the origin, the line at 
infinity is also consistent with the claim.
\end{proof}
Under the assumptions of Lemma~\ref{Lem:WS}, it follows that an algebraic curve 
$D$ in $\P^2$ has a real tangent that intersects the interior of the set $W^\ast$, 
if and only if a real point of the dual curve $D^\vee$ lies outside of $W$. 
\par
\begin{Exa}\label{exa:television}
As an example, let us verify that the convex set
\[
S=\{(x_1,x_2)^T \mid x_1^6+x_2^6\leq 1\}
\]
is the dual convex set to $W$ defined in equation \eqref{eq:htele}.
We verified in Step~2 of Sec.~\ref{sec:revealingexample} that the 
point $(-h(x):x_1:x_2)\in\P^2$ is a root of $p=x_0^6-x_1^6-x_2^6$ for all 
nonzero $(x_1,x_2)^T\in\R^2$. Hence the tuples $\,-(x_1,x_2)^T/h(x)\,$ 
parametrize the boundary $x_1^6+x_2^6=1$ of $S$. We saw in the proof of 
Lemma~\ref{Lem:WS} that these tuples also parametrize the boundary of 
$W^\ast$, so we have $S=W^\ast$.
\par
We corroborate that the isolated, singular points of the real, affine part of 
$D^\vee$, depicted in Fig.~\ref{fig:television6}c), lie outside of $W$ 
because their polars, depicted in Fig.~\ref{fig:television6}d), intersect
the interior of the convex set $S$. 
\end{Exa}
%
%
%
\section{Aspects of the Proof of Kippenhahn's Assertion}
\label{sec:Kippenhahnright}
Following \cite[Thm.~4.9]{PSW2021}, we discuss aspects of a proof that 
the Kippenhahn curve $C_A$ is included in the numerical range $W=W(A)$. 
Here we use the determinant $p=\det(x_0\id+x_1K+x_2L)$ and matrices 
$A=K+\ii L$ as in Sec.~\ref{sec:constructKC}, whereas \cite{PSW2021} 
employs hyperbolic polynomials 
\cite{HeltonVinnikov2007,Netzer2012,PlaumannVinzant2013,Sinn2015}.
\par
\textbf{Observation 1.} 
The convex dual set to the numerical ranges is a well-known object. 
A set of the form 
\[
S=\{(x_1,x_2)^T\in\R^2: \,\id+x_1K+x_2L\,\text{ is positive semi-definite} \}
\]
is a \emph{spectrahedron}, see \cite{Netzer2012} and the references 
therein. This spectrahedron is the convex dual to the numerical range
\cite{HeltonSpitkovsky2012}, 
\begin{align*}
 S &= \{(x_1,x_2)^T\in\R^2: 
 \,\langle\eta,(\id+x_1K+x_2L)\eta\rangle\geq 0\,\, 
 \forall \eta\in\C^n, \langle \eta\mid \eta\rangle=1\,\}\\
   &= \{(x_1,x_2)^T\in\R^2: 
 \,1+x_1y_1+x_2y_2\geq 0\,\, 
 \forall (y_1,y_2)\in W \,\}\\
   &= W^\ast\,.
\end{align*}
\par
\textbf{Observation 2.} 
A real line that intersects an interior point of the spectrahedron $S$
intersects the curve $D$ only in real points. 
Let $\{e+\lambda x \mid \lambda\in\C\}$ be such a line, where $e=(e_1,e_2)^T$ 
is an interior point of $S$ and $x=(x_1,x_2)^T$ is nonzero. This line has the 
point at infinity $(0:x_1:x_2)$. The line intersects the curve $D$ at the points 
$e+\lambda x$, $\lambda\in\C$, for which 
\[
\det(\underbrace{\id+e_1K+e_2L}_A+\lambda\underbrace{(x_1K+x_2L)}_B)
=\det(A+\lambda B)
\]
is zero, and possibly at the point $(0:x_1:x_2)$ at infinity if $\det(B)=0$.
Since $e$ is an interior point of $S$, the matrix $A$ is positive definite.
Substituting $\lambda=-\frac{1}{\mu}$, we obtain
\[
\det(A+\lambda B)
=\det({\textstyle-\frac{1}{\mu}}A)
\det(-\mu\,\id+A^{-\frac{1}{2}}BA^{-\frac{1}{2}})\,.
\]
As the matrix $A^{-\frac{1}{2}}BA^{-\frac{1}{2}}$ is hermitian, its eigenvalues
are real. This shows that the line intersects $D$ only in real points.
\par
Roots at infinity can occur. For example, when $n=2$ and
\[
K=\left(\begin{array}{rr}
0 & 1\\1 & 0
\end{array}\right)
\qquad\text{and}\qquad
L=\left(\begin{array}{rr}
1 & 0\\0 & 0
\end{array}\right)\,,
\]
then $p=x_0^2+x_0x_2-x_1^2$. The affine part of the curve $D$ is the graph 
of the function $x_2=x_1^2-1$, which is a parabola.
The $x_2$-axis intersects the interior point $(0,0)^T$ of the spectrahedron 
enclosed by the parabola. The projective line corresponding to the $x_2$-axis
intersects the curve $D$ at the vertex of the parabola $(1:0:-1)$ and at the 
point $(0:0:1)$ at infinity. 
\par
\textbf{Observation 3.} 
We refer to \cite[Lemma 4.2]{PSW2021} and \cite[Sec.~6]{PSW2021}
for the statement that a real line that intersects the interior of the 
spectrahedron $S$ cannot be tangent to the curve $D$ at a real point.
This statement was also proved in \cite[Cor.~2.3]{ChienNakazato2010}.
\par
\textbf{The Proof.} 
Suppose the Kippenhahn curve $C_A$ is not included in the numerical range 
$W$\!. Then there is a real point $y$ outside of $W$ that lies on the dual 
curve $D^\vee$. By the definition of the dual curve, the polar of $y$ 
is a real tangent to the curve $D$ at some point $x$. 
Lemma~\ref{Lem:WS} shows this real tangent intersects the interior 
of the dual convex set $W^\ast$, which is $S=W^\ast$ by the first observation 
above. As per the second observation, the point $x$ is a real point. This 
contradicts the third observation.
\par
%
%
%
%
\section{Appendix: Equations of Curves}
\label{app:dualcurve}
The polynomial $p=\det(x_0\id+x_1K+x_2L)$ used in Sec.~\ref{sec:constructKC}, 
with matrices from equation \eqref{eq:KL}, is 
\[\begin{array}{l}
p = x_0^3
-\frac{3}{4} x_2 x_0^2
-2 x_1^2 x_0-\frac{21}{16} x_2^2 x_0
+\frac{55}{64} x_2^3
-\frac{3}{2} x_1^2 x_2\,.
\end{array}\]
The dual curve to the set $p=0$ is the set $q=0$, where $q$  
is the irreducible, homogeneous polynomial of degree six
\[\begin{array}{l}
1485 y_0^6
-3672 y_2 y_0^5
-15282 y_1^2 y_0^4
-2448 y_2^2 y_0^4
+12032 y_2^3 y_0^3
-19872 y_1^2 y_2 y_0^3\\
+9504 y_1^4 y_0^2
-5376 y_2^4 y_0^2
+21312 y_1^2 y_2^2 y_0^2
-6144 y_2^5 y_0
+13824 y_1^2 y_2^3 y_0\\
+27648 y_1^4 y_2 y_0
+864 y_1^6+4096 y_2^6
-4608 y_1^2 y_2^4
+13824 y_1^4 y_2^2\,.
\end{array}\]
The code from \texttt{Wolfram Mathematica} used for 
this calculation is as follows.
\vskip5pt\noindent
\hspace{1em}%
\includegraphics[width=\linewidth-1em]{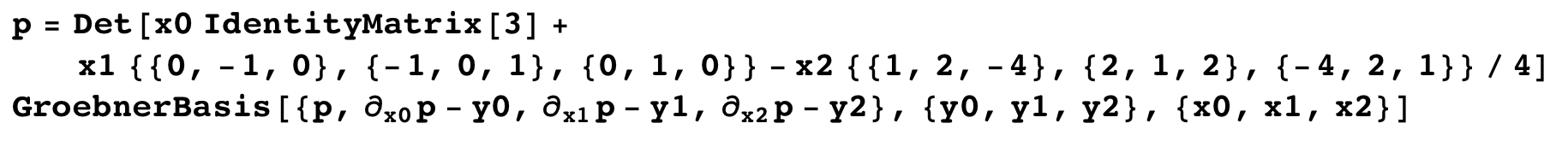}%
\par
The curve $x_0^6-x_1^6-x_2^6=0$ used in Sec.~\ref{sec:revealingexample}
is known as a \emph{Fermat curve} \cite[Sec.~3.6]{Fischer2001}. The dual 
curve is the set $q=0$, where $q$  is the irreducible, homogeneous 
polynomial of degree $30$
\[\begin{array}{l}
y_0^{30}
-5 y_1^6 y_0^{24}
-5 y_2^6 y_0^{24}
+10 y_1^{12} y_0^{18}
+10 y_2^{12} y_0^{18}
-605 y_1^6 y_2^6 y_0^{18}
-10 y_1^{18} y_0^{12}\\
-10 y_2^{18} y_0^{12}
-1905 y_1^6 y_2^{12} y_0^{12}
-1905 y_1^{12} y_2^6 y_0^{12}
+5 y_1^{24} y_0^6
+5 y_2^{24} y_0^6
-605 y_1^6 y_2^{18} y_0^6\\
+1905 y_1^{12} y_2^{12} y_0^6
-605 y_1^{18} y_2^6 y_0^6
-y_1^{30}
-y_2^{30}
-5 y_1^6 y_2^{24}
-10 y_1^{12} y_2^{18}
-10 y_1^{18} y_2^{12}\\
-5 y_1^{24} y_2^6\,.
\end{array}\]
The code from \texttt{Wolfram Mathematica} used for 
this calculation is as follows.
\vskip5pt\noindent
\hspace{1em}%
\includegraphics[width=\linewidth-1em]{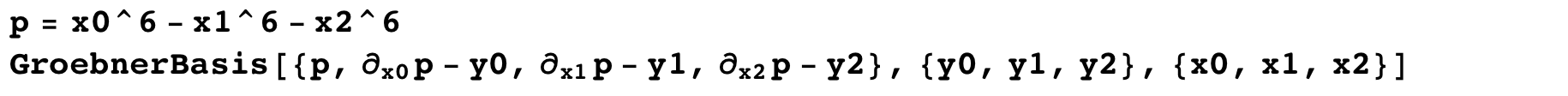}%
\par
%
%
\vskip\baselineskip\noindent
\textbf{Acknowledgments.} 
The author gratefully acknowledges the suggestions made by an anonymous referee, 
Ilya M.~Spitkovsky, and Daniel Plaumann, who helped to improve the paper. He is 
greatly indebted to Karol {\.Z}yczkowski for the hospitality during the IWOTA 
workshop in Kraków, Poland, in September 2022, and to the organizers of this 
workshop for the financial support.
\par
%
%
\bibliographystyle{plain}

\begin{thebibliography}{10}
%
\bibitem{Bebiano-etal2021} N.~Bebiano, J.~da Providéncia, and I.\,M.~Spitkovsky, 
\emph{On Kippenhahn curves and higher-rank numerical ranges of some matrices},
Linear Algebra and its Applications \textbf{629}, 246--257 (2021).
%
\bibitem{Blekerman-etal2013} G.~Blekherman, P.\,A.~Parrilo, and R.\,R.~Thomas, 
eds., \emph{Semidefinite Optimization and Convex Algebraic Geometry}, 
Philadelphia: SIAM, 2013.
%
\bibitem{ChienNakazato2010} M.-T.~Chien and H.~Nakazato, 
\emph{Joint numerical range and its generating hypersurface},
Linear Algebra and its Applications \textbf{432}:1, 173--179 (2010).
%
\bibitem{ChienNakazato2012} M.-T.~Chien and H.~Nakazato,
\emph{Singular points of the ternary polynomials associated with 4-by-4 matrices}, Electronic Journal of Linear Algebra \textbf{23}:1, 755--769 (2012).
%
\bibitem{Cox-etal2015} D.\,A.~Cox, J.~Little, and D.~O'Shea, 
\emph{Ideals, Varieties, and Algorithms}, 
Cham: Springer International Publishing, 2015.
%
\bibitem{Daepp-etal2018} U.~Daepp, P.~Gorkin, A.~Shaffer, and K.~Voss,
\emph{Finding Ellipses: What Blaschke Products, Poncelet's Theorem, 
and the Numerical Range Know About Each Other}, 
Providence: MAA Press, 2018.
%
\bibitem{Fischer2001} G.~Fischer, 
\emph{Plane Algebraic Curves}, 
Providence: AMS, 2001.
%
\bibitem{GauWu2021} H.-L.~Gau and P.\,Y.~Wu,
\emph{Numerical Ranges of Hilbert Space Operators},
Cambridge University Press, 2021.
%
\bibitem{Gawron-etal2010} P.~Gawron, Z.~Pucha{\l}a, J.\,A.~Miszczak, 
{\L}.~Skowronek, and K.~{\.Z}yczkowski,
\emph{Restricted numerical range: A versatile tool in the theory of quantum information},
Journal of Mathematical Physics \textbf{51}:10, 102204 (2010).
%
\bibitem{Hausdorff1919} F.~Hausdorff,
\emph{Der Wertvorrat einer Bilinearform},
Math.~Z.~\textbf{3}:1, 314--316 (1919).
%
\bibitem{HeltonSpitkovsky2012} J.\,W.~Helton and I.\,M.~Spitkovsky,
\emph{The possible shapes of numerical ranges},
Operators and Matrices \textbf{6}:3, 607--611 (2012). 
%
\bibitem{HeltonVinnikov2007} J.\,W.~Helton and V.~Vinnikov,
\emph{Linear matrix inequality representation of sets},
Communications on pure and applied mathematics \textbf{60}:5, 654--674 (2007).
%
\bibitem{Henrion2010} D.~Henrion,
\emph{Semidefinite geometry of the numerical range},
Electronic Journal of Linear Algebra \textbf{20}, 322--332 (2010).
%
\bibitem{HornJohnson1991} R.\,A.~Horn and C.\,R.~Johnson,
\emph{Topics in Matrix Analysis}, 
New York: Cambridge University Press, 1991.
%
\bibitem{Jefferies2004} B. Jefferies, 
\emph{Spectral Properties of Noncommuting Operators}, 
Berlin: Springer, 2004.
%
\bibitem{JiangSpitkovsky2022} M.~Jiang and I.\,M.~Spitkovsky,
\emph{On some reciprocal matrices with elliptical components of their Kippenhahn curves}, 
Special Matrices \textbf{10}:1, 117--130 (2022).
%
\bibitem{Keeler-etal1997} D.\,S.~Keeler, L.~Rodman, and I.\,M.~Spitkovsky,
\emph{The numerical range of $3\times 3$ matrices},
Linear Algebra and its Applications \textbf{252}:1--3, 115--139 (1997).
%
\bibitem{Kippenhahn1951} R.~Kippenhahn, 
\emph{Über den Wertevorrat einer Matrix},
Mathematische Nachrichten \textbf{6}:3--4, 193--228 (1951).
%
\bibitem{Mumford1976} D.~Mumford, 
\emph{Algebraic Geometry. 1: Complex Projective Varieties}, 
Corr.~2.~print, Berlin: Springer, 1976.
%
\bibitem{Netzer2012} T. Netzer,
\emph{Spectrahedra and Their Shadows},
Habilitationsschrift, Universität Leipzig, 2012.
%
\bibitem{PSW2021} D.~Plaumann, R.~Sinn, and S.~Weis, 
\emph{Kippenhahn's Theorem for joint numerical ranges and quantum states},
SIAM Journal on Applied Algebra and Geometry \textbf{5}:1, 86--113 (2021).
%
\bibitem{PlaumannVinzant2013} D.~Plaumann and C.~Vinzant,
\emph{Determinantal representations of hyperbolic plane curves: 
An elementary approach},
Journal of Symbolic Computation \textbf{57}, 48--60 (2013).
%
\bibitem{Pollack1974} F.\,M.~Pollack,
\emph{Numerical range and convex sets},
Can.\ math.\ bull.\ \textbf{17}:2, 295--296.
%
\bibitem{Sanyal-etal2011} R.~Sanyal, F.~Sottile, and B.~Sturmfels,
\emph{Orbitopes}, Mathematika \textbf{57}:02, 275--314 (2011).
%
\bibitem{Scheiderer2018} C.~Scheiderer,
\emph{Semidefinite representation for convex hulls of real algebraic curves},
SIAM Journal on Applied Algebra and Geometry \textbf{2}:1, 1--25 (2018).
%
\bibitem{Schneider2014} R.~Schneider, 
\emph{Convex Bodies: The Brunn-Minkowski Theory}, 2nd ed.,
New York: Cambridge University Press, 2014.
%
\bibitem{Sinn2015} R.~Sinn, 
\emph{Algebraic boundaries of convex semi-algebraic sets},
Mathematical Sciences (2015) 2:3.
\texttt{https://doi.org/10.1186/s40687-015-0022-0}
(open access)
%
\bibitem{Toeplitz1918} O.~Toeplitz,
\emph{Das algebraische Analogon zu einem Satze von Fej\'er},
Mathematische Zeitschrift \textbf{2}:1--2, 187--197 (1918).
%
%
\end{thebibliography}

%
%
%
\end{document}